\theoremstyle{plain}
\newtheorem{theorem}{Theorem}[section]
\newtheorem{proposition}[theorem]{Proposition}
\newtheorem{corollary}[theorem]{Corollary}
\theoremstyle{definition}
\newtheorem{example}[theorem]{Example}
\newtheorem{remark}[theorem]{Remark}
\numberwithin{equation}{section}
\begin{document}
\selectlanguage{english}

\title{Codimension one holomorphic foliations on $\mathbb P^n_{\mathbb C}$: problems in complex geometry.}

\author{Dominique Cerveau}

\address{Institut Universitaire de France et IRMAR, Campus de Beaulieu, 35042 Rennes Cedex France.}
\email{dominique.cerveau@univ-rennes1.fr}

\maketitle

\begin{flushright}
{\sl En l'honneur de H. Hironaka pour son $80^{\hbox{\rm \tiny{\`eme}}}$ anniversaire}
\end{flushright}

\medskip\medskip\medskip

 \begin{abstract} 
After a short review on foliations, we prove that a codimension $1$ holomorphic foliation on $\mathbb P^3_{\mathbb C}$ with simple singularities is given by a closed rational $1$-form. The proof uses Hironaka-Matsumura prolongation theorem of formal objects.

\noindent{\it Keywords. --- algebraic foliations, reduction of singularities}

\noindent{\it 2010 Mathematics Subject Classification. --- 37F75}
\end{abstract}

\section*{Preliminaries} Let $M$ be a complex compact connected manifold of dimension $n\geq 2$. A codimension $1$ singular holomorphic foliation $\mathcal F$ on $M$ is given by a covering by open subsets $(V_{j})_{j\in J}$ and a collection of integrable holomorphic $1$-forms $\omega_{j}$ on $V_{j}$, $\omega_{j} \wedge d\omega_{j}=0$, having codimension $\geq2$ zero sets such that on each non empty intersection $V_{j} \cap V_{k}:$ 
 
 (*)\qquad $\omega_{j}=g_{jk}.\omega_{k} \ \ \hbox{\rm with}\ \ g_{jk} \in \mathcal O^{*} (V_{j} \cap U_{k}).$
 
 Let $\mathrm{Sing}\,\omega_{j}:= \{p \in V_{j}\ ,\ \omega_{j} (p)=0\}$ be the singular set of $\omega_{j}$. Condition (*) implies that $\mathrm{Sing}\,\mathcal F:=\displaystyle\cup_{j\in J}\,\mathrm{Sing}\,  \omega_{j}$ is a codimension $\geq 2$ analytic subset of $M$, the singular set of $\mathcal F$.
 
 In the special case where $M$ is a projective manifold and $\mathcal F$ a foliation as above, we can associate to $\mathcal F$ a meromorphic $1$-form $\omega$ in the following way. We take a rational vector fields $Z$ on $M$, not tangent to $\mathcal F$, that is $h_{j}=i_{Z_{\vert U_j}} \omega_{j}\not\equiv 0$; the meromorphic $1$-form $\omega$ defined on $V_{j}$ by $\omega_{\vert U_j}=\omega_{j}/h_{j}$ is global and integrable. In this case we will say that $\omega$ defines $\mathcal F$.
 
 There is another interesting very special case: the case $M=\mathbb P^n_{\mathbb C}$, the $n$ dimensional complex projective space. In that context, we have a theorem of Chow-type. Denote by $\pi\colon\mathbb C^{n+1}\setminus\{0\} \to \mathbb P^n_{\mathbb C}$ the natural projection, and consider $\pi^{-1} \mathcal F$ the pull-back of $\mathcal F$ by $\pi$; with the previous notations, $\pi^{-1} \mathcal F$ is defined by the $1$-form $\pi^* \omega_{j}$ on $\pi^{-1} (U_{j})$. Recall that, for $n\geq 2$, we have $H^1 (\mathbb C^{n+1} \setminus\{0\}, \mathcal O^{*})=\{1\}$: it is a result due to Cartan (\cite{Cartan}). As a consequence, there exists a global holomorphic $1$-form $\omega$ on $\mathbb C^{n-1}\setminus\{0\}$ which defines $\pi^{-1} \mathcal F$ on $\mathbb C^{n+1}\setminus\{0\}$.
 
 By Hartog's prolongation theorem $\omega$ can be extended holomorphically at 0. By construction we have $i_{R} \omega=0$, where $R$ is the Euler (or radial) vector fields:
 $$R=\displaystyle\mathop{\Sigma}^{n}_{i=0} z_{j} \frac{\partial}{\partial z_{j}}.$$
This fact and the integrability condition imply that $\omega$ is colinear to an integrable homogeneous $1$-form $\omega_{\nu+1}=\displaystyle\mathop{\Sigma}^{n}_{i=0} A_{i} (z) dz_{i}$, $A_{i}$ homogenenous polynomials of degree $\nu+1$, $\mathrm{gcd} (A_{0},\cdots , A_{n})=1$ ({\it i.e.} $\mathrm{cod}\,\mathrm{Sing}\, \omega_{\nu+1}\geq 2$). This is the so-called foliated Chow's Thoerem:

\begin{theorem} 
To any codimension $1$ holomorphic foliation $\mathcal F$ on $\mathbb P_{\mathbb C}^n$ is asociatied an homogeneous integrable $1$-form $\omega_{\nu+1}$ on $\mathbb C^{n+1}$ defining $\pi^{-1} \mathcal F$ with
$\mathrm{cod}\,\mathrm{Sing}\,\omega_{\nu+1} \geq 2$.
\end{theorem}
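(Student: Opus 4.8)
The plan is to make rigorous the construction sketched in the lines preceding the statement, in two stages: first produce a global holomorphic $1$-form $\omega$ on $\mathbb C^{n+1}$ defining $\pi^{-1}\mathcal F$, with $i_R\omega=0$ and $\mathrm{cod}\,\mathrm{Sing}\,\omega\geq 2$; then show that any such $\omega$ is colinear to a homogeneous \emph{polynomial} $1$-form whose coefficients have gcd $1$.

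For the first stage I would pull $\mathcal F$ back by $\pi$: the forms $\pi^*\omega_j$ define $\pi^{-1}\mathcal F$ on $\pi^{-1}(U_j)$ and satisfy $\pi^*\omega_j=(\pi^*g_{jk})\,\pi^*\omega_k$, where $\{g_{jk}\}$ is a multiplicative $1$-cocycle. Since $H^1(\mathbb C^{n+1}\setminus\{0\},\mathcal O^*)=\{1\}$ by Cartan's theorem, one can write $\pi^*g_{jk}=h_k h_j^{-1}$ with $h_j\in\mathcal O^*(\pi^{-1}(U_j))$, so the forms $h_j\,\pi^*\omega_j$ agree on overlaps and glue to a holomorphic $1$-form on $\mathbb C^{n+1}\setminus\{0\}$ defining $\pi^{-1}\mathcal F$; being locally a unit times $\pi^*\omega_j$, with $\pi$ a submersion, its zero set has codimension $\geq 2$. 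As $\{0\}$ has codimension $n+1\geq 3$, Hartogs' theorem extends this to a holomorphic $\omega$ on $\mathbb C^{n+1}$; the zero set cannot acquire a codimension $1$ component through $0$ (such a component, minus $0$, would still be a hypersurface contained in the codimension $\geq 2$ set $\mathrm{Sing}\,\omega\cap(\mathbb C^{n+1}\setminus\{0\})$), so $\mathrm{cod}\,\mathrm{Sing}\,\omega\geq 2$ on all of $\mathbb C^{n+1}$. Finally $d\pi(R)=0$ gives $i_R(\pi^*\omega_j)=\omega_j(d\pi(R))=0$, hence $i_R\omega=0$ off $0$ and, by continuity, everywhere.

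The heart of the argument is the second stage. Write $\omega=\sum_{d\geq d_0}\omega_d$ for the expansion of the (entire) coefficients of $\omega$ into homogeneous pieces, with $\omega_{d_0}\neq 0$; each $\omega_d$ is a polynomial $1$-form, homogeneous of degree $d$. The foliation $\pi^{-1}\mathcal F$ is invariant under every homothety $m_\lambda\colon z\mapsto\lambda z$, since $\pi\circ m_\lambda=\pi$; hence $m_\lambda^*\omega$ and $\omega$ define the same foliation and $m_\lambda^*\omega\wedge\omega\equiv 0$. Since $m_\lambda^*\omega_d=\lambda^{d+1}\omega_d$, expanding $0=m_\lambda^*\omega\wedge\omega=\sum_{d\geq d_0}\lambda^{d+1}(\omega_d\wedge\omega)$ and comparing powers of $\lambda$ yields $\omega_d\wedge\omega=0$ for every $d$, in particular $\omega_{d_0}\wedge\omega=0$. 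Now divide $\omega_{d_0}$ by the gcd $g\in\mathbb C[z]$ of its coefficients: $\omega_{d_0}=g\,\omega_{\nu+1}$ with $\omega_{\nu+1}=\sum_i A_i\,dz_i$ homogeneous of degree $\nu+1$ and $\mathrm{gcd}(A_0,\dots,A_n)=1$, whence $\omega_{\nu+1}\wedge\omega=0$. Writing $\omega=\sum_i a_i\,dz_i$ this reads $A_i a_j=A_j a_i$ for all $i,j$; since the common zero set $Z=\{A_0=\dots=A_n=0\}$ has codimension $\geq 2$, the function equal to $a_i/A_i$ wherever $A_i\neq 0$ (well-defined by these relations) is holomorphic on $\mathbb C^{n+1}\setminus Z$, and by Hartogs extends to an entire $f$ with $\omega=f\,\omega_{\nu+1}$. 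Moreover $f$ vanishes nowhere: $\{f=0\}$, if non-empty, is a hypersurface contained in $\mathrm{Sing}\,\omega$, contradicting $\mathrm{cod}\,\mathrm{Sing}\,\omega\geq 2$. Thus $\omega_{\nu+1}=f^{-1}\omega$ is colinear to $\omega$: it is homogeneous of degree $\nu+1$, has $\mathrm{cod}\,\mathrm{Sing}\,\omega_{\nu+1}\geq 2$, satisfies $i_R\omega_{\nu+1}=0$ and the integrability condition (multiplication by the unit $f^{-1}$ preserving both), and defines $\pi^{-1}\mathcal F$. One also notes $\nu+1\geq 1$, since a nonzero constant-coefficient form could not satisfy $i_R\omega_{\nu+1}=0$.

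The steps I expect to be delicate are the two colinearity assertions — $m_\lambda^*\omega\wedge\omega\equiv 0$ and $\omega=f\,\omega_{\nu+1}$ with $f$ a unit — both resting on the codimension $\geq 2$ hypothesis on the singular set together with Hartogs/Riemann-type extension, and the bookkeeping that keeps $\mathrm{Sing}\,\omega$ of codimension $\geq 2$ after extending across the origin. The rest — the homogeneous expansion, the behaviour of homogeneous forms and of $i_R$ under homothety, and the standard fact that two forms defining the same codimension $1$ foliation are everywhere proportional — is routine.
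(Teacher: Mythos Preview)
Your proposal is correct and follows essentially the same route as the paper, whose proof is the sketch given in the paragraph preceding the statement: pull back, trivialize the cocycle via Cartan's $H^1(\mathbb C^{n+1}\setminus\{0\},\mathcal O^*)=\{1\}$, extend across $0$ by Hartogs, then use the $\mathbb C^*$-equivariance to extract a homogeneous form with coprime coefficients. The only cosmetic difference is that the paper phrases the last step as ``$i_R\omega=0$ together with integrability forces colinearity with a homogeneous form'', i.e.\ the infinitesimal version via $L_R\omega$, whereas you use the equivalent finite formulation $m_\lambda^*\omega\wedge\omega=0$ coming from $\pi\circ m_\lambda=\pi$; both unpack to the same computation.
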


By definition the integer $\nu$ is the degree of the foliation $\mathcal F$. The homogeneous $1$-form $\omega_{\nu+1}$ is well defined up to multiplication by non zero complex number.

\begin{remark}
Denote by $U_{i}:=\{z_{i}=1\}\subset \mathbb P^n_{\mathbb C}$ the usual affine charts associated to the projective coordinates $(z_0:\cdots:z_{n})$. Then $\omega_{\nu+1_{\vert U_j}}=\omega_{j}$ is a polynomial $1$-form on $U_{j} \simeq \mathbb C^n$ which can be extended meromorphically to $\mathbb P^n_{\mathbb C}$. 
\end{remark}

We have the following facts; if $\mathcal F$ is a foliation of degree $\nu$ on $\mathbb P^n_{\mathbb C}$ then:
\begin{itemize}
\item the integer $\nu=\deg\mathcal F$ is exactly the number of tangencies of $\mathcal F$ with a generic line $L$, that is the number of points $m \in L$ where $L$ is not transverse to $\mathcal F$ (if~$m\in U_{j}$ then $L$ is "contained" in the kernel of the linear form $\omega_{j} (m)$).

\item the set $\mathrm{Sing}\,\mathcal F$ has non trivial components of codimension $2$: points in $\mathbb P^2_{\mathbb C}$, curves in $\mathbb P^3_{\mathbb C} \ldots$ In particular, there are no non singular codimension $1$ foliations on~$\mathbb P^n_{\mathbb C}$ except for $n=1$. This fact can be proved by using De Rham-Saito division lemma \cite{Saito}.
\end{itemize}

If $\mathcal F$ is a codimension $1$ foliation on $M$, the leaves of $\mathcal F$ are, by defintion, the leaves (maximal integral immersed manifolds) of the regular foliation $\mathcal F_{\vert M\setminus\mathrm{Sing}\,\mathcal F}$.

An exciting problem is to give the description of the spaces $\mathcal F(n;d)$ of codimension $1$ foliations of degree $d$ on $\mathbb P^n_{\mathbb C}$, in particular the irreducible components of these spaces. For $n=2$ the sets $\mathcal F (2;d)$ are Zariski open sets in some projective spaces and the consistency of the problem appears in dimension $\geq 3$.

A second problem consists, for each given irreducible component of $\mathcal F(n;d)$, in the description of the leaves of generic elements of that component.

\section{Some examples and known facts.}
There are many examples of foliations without singularities, in particular on tori, Hopf manifolds etc. Regular foliations on compact complex surfaces are classified by Brunella (\cite{Brunella}). Here we focus on foliations in $\mathbb P^n_{\mathbb C}$. As we have seen above, such foliations are singular.

\begin{example} Foliations of degree 0 on $\mathbb P^n_{\mathbb C}$. 

Such foliations are pencils of hyperplanes. Up to conjugacy by Aut $\mathbb P^n_{\mathbb C}$, the group of automorphisms of  $\mathbb P^n_{\mathbb C}$, there is one model, the foliation $\mathcal F_{0}$ given by the homogeneous $1$-form $z_{0} dz_{1}-z_{1} dz_{0}$. Note that $\mathcal F_{0}$ is also given by the global closed $1$-form $\frac{dz_{0}}{z_{0}}-\frac{dz_{1}}{z_{1}}$. The singular locus of $\mathcal F_{0}$ is the linear space $\{z_{0}=z_{1}=0\}\simeq \mathbb P^{n-2}_{\mathbb C}$ and the closure of the leaves are hyperplanes $z_{0}/z_{1}=$cste. Remark also that, by blowing-up the singular locus, we obtain a regular foliation on the blow-up of $\mathbb P^n_{\mathbb C}$.

The space $\mathcal F(n;0)$ is isomorphic to the Grassmanian of $(n-2)$-linear subspaces of $\mathbb P^n_{\mathbb C}$.
\end{example}

\begin{example} Foliations of degree 1 on $\mathbb P^2_{\mathbb C}$.

A generic element of $\mathcal F(2;1)$ is given in a good chart $\{(x,y)\}\simeq \mathbb C^2$ by the linear $1$-form $\lambda y dx-xdy$, $\lambda \in \mathbb C$. The leaves are parametrized by $$\mathbb  C\ni t\mapsto (x_{0} e^t, y_{0} e^{\lambda t}) \in \mathbb C^2 \subset \mathbb P^2_{\mathbb C}.$$ If $\lambda \in \mathbb Q$, the closure of the leaves are rational algebraic curves (of type $x^p y^q=$cste), and if $\lambda\in \mathbb C\setminus\mathbb Q$ the leaves are transcendental Pfaffian sets.

All foliations of degree $1$ on $\mathbb P^2_{\mathbb C}$ are given by a closed rational $1$-form ($\lambda \frac{dx}{x}-\frac{dy}{y}$ in the generic case). The set $\mathcal F(2;1)$ can be identified to a Zariski open set in the projective space $\mathbb P^7_{\mathbb C}$.
\end{example}

\begin{example} The set $\mathcal F (n;1)$, $n\geq 3$.

For $n\geq 3$, the set $\mathcal F(n;1)$ has two irreductible components corresponding to the following alernative; if $\mathcal F \in \mathcal F (n;1)$:

(*) either there exists a linear map $F\colon\mathbb P^n_{\mathbb C} \dashrightarrow \mathbb P^2_{\mathbb C}$ and $\mathcal F_{0}\in \mathcal F (2;1)$ such that $\mathcal F=F^{-1}\mathcal F_{0}$.

(**) or in a good affine chart $\mathbb C^n\subset \mathbb P^n_{\mathbb C}$, $\mathcal F$ is given by the $1$-form $\omega=dP$, where~$P$ is polynomial of degree 2. The leaves are the level sets of $P$.

In each of these two cases $\mathcal F$ is given by a closed rational $1$-form.
\end{example}

\begin{example}\label{ex:4} Quadratic foliations on $\mathbb P^n_{\mathbb C}$, $n\geq 3$.

The description of $\mathcal F(n;2)$, $n\geq 3$, is a little bit  more difficult; $\mathcal F (n;2)$ has six irreductible components (\cite{CerveauLinsNeto}) and we have the following alternative. If $\mathcal F \in \mathcal F(n;2)$, $n\geq 3$, then:

(*) either there exists a linear map $F\colon \mathbb P^n_{\mathbb C}\dashrightarrow \mathbb P^2_{\mathbb C}$ and a foliation $\mathcal F_{0}\in \mathcal F(2;2)$ on~$\mathbb P^2_{\mathbb C}$ such that $\mathcal F=F^{-1} \mathcal F_{0}$, the pull-back of $\mathcal F_{0}$ by $F$ (it corresponds to one component of $\mathcal F(n;2)$).

(**) or $\mathcal F$ is defined by a closed rational $1$-form. This second part of the alternative gives $5$ components.

One of the component is a $\mathrm{Aut}\,\mathbb P^n_{\mathbb C}$-orbit of the so-called "exceptional foliation"; this means in particular that there exists quadratic stable foliations. In fact for any $d$ the set $\mathcal F(n;d)$, $n\geq 3$, contains stable foliations (\cite{CACGLN}).
\end{example}

\begin{example} Foliations associated to closed meromorphic $1$-forms.

To each meromorphic closed $1$-form $\omega$ on $\mathbb P^n_{\mathbb C}$ is associated a codimension $1$-holomorphic foliation. Recall that such a closed form has a decomposition:
$$\omega=\Sigma \lambda_{i} \frac{d f_{i}}{f_{i}}+dh$$
where the $\lambda'_{i}s$ are complex numbers (the residues or periods) and the $f'_{i}s$ and $h$ are rational functions. The leaves are (outside the singular set of the foliation) the connected components of the "level sets" of the multivalued function $\Sigma \lambda_{i} \log f_{i}+h$. There are many deep questions concerning the nature of these leaves in relation with topology, number theory, hyperbolic geometry....

As it can be seen in \cite{CalvoAndrade}, \cite{LinsNeto}, for each degree $d$, these are several irreducible components of $\mathcal F(n;d)$, $n\geq3$, whose generic elements correspond to foliations given by closed $1$-forms.
\end{example}

\begin{example}\label{ex:6} Degree 3 foliations.

The explicit decomposition in irreducible components of the space $\mathcal F(n;3)$, $n\geq~3$, is not known. Nevertheless there is a qualitative description of the elements of $\mathcal F(n;3)$; in fact we have an alternative quasi-similar to Example \ref{ex:4}: for $\mathcal F\in \mathcal F(n;3)$

(*) either there exist $F\colon\mathbb P^n_{\mathbb C}\dashrightarrow \mathbb P^2_{\mathbb C}$ rational and $\mathcal F_{0}$ a foliation on $\mathbb P^2_{\mathbb C}$ such that $\mathcal F= F^{-1} \mathcal F_{0}$, 

(**) or $\mathcal F$ is defined by a closed rational $1$-form. 

This alternative is the consequence of the two papers \cite{CerveauLinsNeto2} and \cite{LPT}; the differen\-ce with Example \ref{ex:4} is that there is no control of the degrees of the rational map $F$ and the foliation $\mathcal F_{0}$.

There exist foliations in degree $>3$ on $\mathbb P^n_{\mathbb C}$, $n\geq 3$, which don't satisfy the alternative of Example \ref{ex:6}. We will now speak a little bit of families of such examples, the so-called transversally projective foliations.
\end{example}

\begin{example} Transversally projective foliations.

To such a foliation $\mathcal F_{0}$ is associated a "$\mathrm{sl}(2;\mathbb C)$ 3-uple" $(\omega_{0}, \omega_{1}, \omega_{2})$ of rational $1$-forms on $\mathbb P^n_{\mathbb C}$ satisfying:

(*) $\mathcal F_{0}$ is given by $\omega_{0}$,

(**) the $\omega_{i}$ verify the Maurer-Cartan conditions:
$$ d\omega_{0}=\omega_{0} \wedge \omega_{1},\hspace{2cm} 
 d\omega_{1}=\omega_{0}\wedge \omega_{2},\hspace{2cm}
 d\omega_{2}=\omega_{1}\wedge \omega_{2}.$$

Remark that a foliation given by a closed rational $1$-form $\omega_{0}$ is a special case of transversally projective foliation (take $\omega_{1}=\omega_{2}=0$).  The Maurer-Cartan conditions imply the integrability of the unfolding:
\begin{center}
$\Omega=dt+\omega_{0}+t \omega_{1}+\frac{t^2}{2} \omega_{2},\,\,\,\,\,\, t\in \mathbb C \subset \mathbb P^1_{\mathbb C},$
\end{center}
which defines a "Riccati-foliation" on $\mathbb P^n_{\mathbb C}\times \mathbb P^1_{\mathbb C}$. We see that the restriction $\Omega$ to $t=0$ gives the foliation $\mathcal F_{0}$ and the restriction to $t=\infty$ gives, in the case $\omega_{2}\not\equiv 0$, a new foliation $\mathcal F_{2}$ associated to $\omega_{2}$.

Note  also that to an ordinary Riccati differential equation $\frac{dy}{dx}=a(x) y^2+b(x) y+c(x)$, $a,b,c\in \mathbb C(x)$ is associated a transversally projective foliation on $\mathbb P^3_{\mathbb C}$ given (in an affine chart) by:
$$\omega_{0}=dz+\omega'_{0} +z\omega'_{1}+\frac{z^2}{2} \omega'_{2}$$
with, denoting by $L$ the Lie derivative:
$$\omega'_{0}=dy-(a(x) y^2+b(x) y+c(x))dx,\hspace{1cm}\omega'_{1}=L_{\frac{\partial}{\partial y}} \omega'_{0},\hspace{1cm}\omega'_{2}=L_{\frac{\partial}{\partial y}} \omega'_{1}.$$

Here the corresponding $\mathrm{sl}(2;\mathbb C)$ 3-uple is $(\omega_{0},\omega_{1}=\omega'_{1}+z \omega'_{2}, \omega_{2}=\omega'_{2})$. We have the following fact: there are explicit constructions of transversally projective foliations on $\mathbb P^3_{\mathbb C}$ associated to some special rational Hilbert-modular surfaces (\cite{CLNLPT}). These foliations are not defined by closed meromorphic $1$-forms and are not rational pull-back of foliations on $\mathbb P^2_{\mathbb C}$ (\emph{see} \cite{CLNLPT}).

All known foliations $\mathcal F$ on $\mathbb P^n_{\mathbb C}$, $n\geq 3$, satisfy the following alternative I: $\mathcal F$ is 

(*) either transversally projective

(**) or a rational pull-back of a foliation $\mathcal F_{0}$ on $\mathbb P^2_{\mathbb C}$.

We don't know if the previous alternative I is always satisfied or if there exist other types of foliations on $\mathbb P^n_{\mathbb C}$. 

It is possible to prove that a transversally projective foliation $\mathcal F$ on $\mathbb P^n_{\mathbb C}$ has  an invariant hypersurface $X\subset \mathbb P^n_{\mathbb C}$ (\emph{see} \cite{CLNLPT}): $X\setminus\mathrm{Sing}\mathcal F$ is a leaf of the regular foliation $\mathcal F_{\vert\mathbb P_{\mathbb C}^n \setminus\mathrm{Sing}\, \mathcal F}$. For example if $\mathcal F$ is given by a closed $1$-form $\omega$, then the divisor of the poles of $\omega$ is such an invariant hypersurface.

A contrario, generic foliations on $\mathbb P^2_{\mathbb C}$, with degree $\geq 2$, have no invariant algebraic curves (\cite{Jouanolou}). This implies that general pull-back foliations $F^{-1}\mathcal F_{0}$, $F\colon \mathbb P^n_{\mathbb C} \dashrightarrow\mathbb P^2_{\mathbb C}$, don't have invariant hypersurfaces.

The following conjecture due to Brunella says that a foliation on $\mathbb P^n_{\mathbb C}$ either is a rational pull-back of a foliation on $\mathbb P^2$ or has an invariant algebraic hypersurface (alternative II). As we have seen alternative I implies alternative II and alternative I is satisfied in small degree $(\leq 3)$ for foliations on $\mathbb P^n_{\mathbb C}$. We mention that alernative I is always satisfied for foliations on $\mathbb P^n_{\Bbbk}$, where $\Bbbk$ is a field of positive characteristic (\cite{CLNLPT}).
\end{example}

\section{Reduction of singularities for codimension one foliations in dimension $\leq 3$.}

In dimension $2$, Seidenberg gives in \cite{Seidenberg} the first statement concerning the reduction of singularities. For germs of analytic subsets $X$ in $\mathbb C^n,0$ we know, following Hironaka, that after suitable blow-up $\pi\colon\widetilde{\mathbb C}^n \rightarrow \mathbb C^n,0$ the total transform $\pi^{-1} (X)$ is locally given by the zeroes of an ideal generated by "monomials". For foliations, due to the divergence of certain normal forms, the local models after reduction of singularities are formal one. Seidenberg's result was generalized in dimension $3$ by Cano-Cerveau (\cite{CanoCerveau}, non-dicritical case) and Cano (\cite{Cano}, general case). This reduction of singularities for foliation allows to prove Thom's conjecture about invariant local hypersurfaces, in dimension $2$ by Camacho-Sad (\cite{CamachoSad}), in dimension $3$ by Cano-Cerveau (\cite{CanoCerveau}) and in any dimension by Cano-Mattei (\cite{CanoMattei}):

\begin{theorem}
Any codimension $1$ germ of non dicritical holomorphic foliation has an invariant hypersurface.
\end{theorem}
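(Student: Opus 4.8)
The plan is to deduce the statement from the reduction of singularities theorems quoted above, following the strategy of Camacho--Sad in dimension $2$ and of Cano--Cerveau and Cano--Mattei in higher dimension. First I would apply the reduction of singularities to the germ $\mathcal F$ at $0$: there is a proper morphism $\pi\colon (\widetilde M,E)\to(\mathbb C^n,0)$, a finite composition of blow-ups with smooth centers contained in the successive singular loci, such that the transformed foliation $\widetilde{\mathcal F}:=\pi^{-1}\mathcal F$ has only \emph{simple} singularities and the exceptional divisor $E=\pi^{-1}(0)$ is a normal crossings hypersurface. The non-dicriticity hypothesis enters precisely here: it guarantees that every irreducible component $E_i$ of $E$ is invariant by $\widetilde{\mathcal F}$ (a dicritical component would be generically transverse to the foliation and would correspond to a "pencil" of separatrices collapsing to $0$). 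Thus it suffices to produce an $\widetilde{\mathcal F}$-invariant hypersurface $\widetilde S\subset\widetilde M$ which is \emph{not} one of the $E_i$; its image $\pi(\widetilde S)$ is then the sought germ of invariant hypersurface of $\mathcal F$ through $0$.

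The local input is the structure of simple singularities: at a simple point $p$ of $\widetilde{\mathcal F}$ lying on $E$, after a (in general only formal) change of coordinates the foliation is a suspension of a two dimensional simple singularity, and among the germs of invariant hypersurfaces through $p$ one finds the local branches of $E$ together with at most one further separatrix transverse to $E$. The question thus becomes a patching problem: decide whether these transverse local separatrices fit together, as $p$ runs over $\mathrm{Sing}\,\widetilde{\mathcal F}$, into a global invariant hypersurface.

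To carry out the patching I would argue by induction on the height of the blow-up tree, the engine being a Camacho--Sad type index theorem along the components of $E$. Along the last created component $E_\ell$ one has an identity expressing a sum of Camacho--Sad indices of $\widetilde{\mathcal F}$ at the points of $\mathrm{Sing}\,\widetilde{\mathcal F}\cap E_\ell$ in terms of the (very negative) self-intersection data of $E_\ell$ produced by the blow-ups; this negativity forces the existence of at least one singular point of $E_\ell$ at which the separatrix transverse to $E_\ell$ is not contained in $E$. One then checks that this separatrix is not created by the last blow-up, i.e. that it is the strict transform of an invariant hypersurface living one level down in the tree; replacing $\widetilde M$ by that intermediate model and iterating, after finitely many steps one reaches the bottom of the tree and obtains the desired invariant hypersurface of $\mathcal F$ at $0$.

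The main obstacle is exactly this inductive descent in dimension $\geq 3$: one must control, at the corners of the normal crossings divisor $E$ (points common to several components $E_i$), the combinatorics of the local separatrices and the fact that at a simple point in dimension $n\geq 3$ a separatrix may be only formal. Keeping track of non-dicriticity and of the index identities through the whole dual complex of $E$ --- rather than along a single divisor as in the surface case --- is what makes the higher dimensional statement genuinely harder; it is the content of the reduction of singularities results of Cano--Cerveau and Cano together with the index and patching arguments of Cano--Mattei.
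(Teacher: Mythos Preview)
The paper does not give its own proof of this theorem: it merely states the result and attributes it to Camacho--Sad (dimension $2$), Cano--Cerveau (dimension $3$) and Cano--Mattei (arbitrary dimension). There is therefore no argument in the paper to compare your proposal against.

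That said, your sketch is broadly faithful to the strategy of those references in dimensions $2$ and $3$: reduce the singularities, use non-dicriticity to make the exceptional divisor invariant, and run a Camacho--Sad type index count along exceptional components to locate a separatrix not contained in $E$, then push it down. One point deserves care: your first step ``apply the reduction of singularities to the germ $\mathcal F$ at $0$ in $\mathbb C^n$'' is not available for $n\geq 4$; reduction of singularities for codimension~$1$ foliations is established only in ambient dimension $\leq 3$ (\cite{Seidenberg}, \cite{CanoCerveau}, \cite{Cano}). The Cano--Mattei argument in arbitrary dimension does \emph{not} proceed by first desingularizing in dimension $n$ and then running an index count on the full dual complex of $E$; rather it reduces the problem by generic plane sections to the already settled low-dimensional case and then extends the separatrix back up. So your outline is accurate for $n\leq 3$, but for general $n$ the route you describe is not the one actually taken (and, as stated, relies on a desingularization theorem that is not known).
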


Recall that there exist, in the dicritical case, codimension $1$ holomorphic foliations without local invariant hypersurface (\cite{Jouanolou}).

We give now the precise statement of reduction of singularities in dimension $3$ (an adapted version to a divisor is given in \cite{Cano} and \cite{CanoCerveau}).

\begin{theorem} 
Let $\mathcal F$ be a codimension $1$ holomorphic foliation over $X=\mathbb C^3,0$. Then there is a finite sequence of permissible blow-ups:
$$X=X(1) \displaystyle\mathop{\longleftarrow}^{^{\tiny{\quad \pi(1)}}} X(2) \displaystyle\mathop{\longleftarrow}^{^{\tiny{\quad \pi(2)}}} \cdots \displaystyle\mathop{\longleftarrow}^{^{\tiny{\ \ \pi(N)}}} X(N)$$
such that at each point $x\in X(N)$ the strict transform $\mathcal F_{N}$ of $\mathcal F$ by $\pi(N)\circ\cdots\circ \pi(1)$ either is non singular or has simple singularity.
\end{theorem}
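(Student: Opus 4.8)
The plan is to run two nested inductions: an inner, essentially local, induction on a lexicographically ordered tuple of numerical invariants of $\mathcal F$ along the strict transform of its singular set, and an outer, global, induction handling the passage from ``pre-simple'' to genuinely simple singularities. I begin by fixing the local data. On each affine chart take an integrable $1$-form $\omega$ with $\mathrm{cod}\,\mathrm{Sing}\,\omega\geq 2$ defining $\mathcal F$, and let $\nu(\mathcal F,x)$ be the order of vanishing of $\omega$ at $x$. Since at each stage we carry along the accumulated exceptional divisor $E$, a normal crossings hypersurface, the right invariant is the $E$-adapted order together with an associated characteristic (Newton) polyhedron in the spirit of Hironaka's resolution, refined by secondary invariants such as the dimension of $\mathrm{Sing}\,\mathcal F$ and the nilpotency type of the linear part of $\omega$. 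Recall also that $x$ is a \emph{simple} (reduced) singularity when, in suitable formal coordinates adapted to $E$, $\omega$ lies in a short explicit list of normal forms — a semisimple ``logarithmic'' part $\sum_i \lambda_i\, dx_i/x_i$ possibly corrected by a unipotent (saddle-node) term — whose eigenvalue data $\lambda_i$ admit no resonance $\sum_i m_i\lambda_i=0$ with $m_i\in\mathbb N$ not all zero, and such that $\mathrm{Sing}\,\mathcal F\cup E$ is a normal crossings configuration at $x$.

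The inner induction is the local reduction. The key lemma is that at a point $x$ where $\mathcal F$ is neither non-singular nor simple, there is a \emph{permissible} center — a point, or a smooth curve contained in $\mathrm{Sing}\,\mathcal F$ having normal crossings with $E$ and compatible with $\mathcal F$ — whose blow-up leaves the invariant non-increasing, and strictly decreases it unless one is in a controlled stable configuration. One controls stability via a maximal-contact-type device: one exhibits an invariant-enough (formal) hypersurface along which the problem descends in dimension, so that the two-dimensional reduction theorem of Seidenberg, applied both to $\mathcal F$ restricted to that hypersurface and to the components of $E$, forces a drop. Iterating over the finitely many strata, one reaches a model in which every point is non-singular or \emph{pre-simple}: the linear part already has the expected normal form, but a forbidden resonance among its eigenvalues may persist.

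The outer induction — passing from pre-simple to simple — is the step I expect to be the main obstacle, precisely because it cannot be made purely local. The resonant pre-simple points sweep out curves; blowing up such a curve removes its resonances but may create new resonant points further along $E$, producing \emph{chains}, and the presence of \emph{dicritical} components of $E$ (along which $\mathcal F$ fails to be tangent to the exceptional divisor) can obstruct the naive strategy. Here one introduces a global combinatorial invariant — counting, with weights coming from the eigenvalue data, the irreducible components of the resonant locus and their incidences — and shows that a carefully ordered sequence of blow-ups of resonant curves makes it strictly decrease; checking that the dicritical components do not interfere is the delicate heart of the argument, and is exactly where the non-dicritical treatment of Cano--Cerveau and the general treatment of Cano part company in difficulty. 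Finally, passing to a small enough representative of the germ $X=\mathbb C^3,0$, the non-simple locus is at each stage covered by finitely many charts and each elementary step strictly decreases a well-ordered invariant, so only finitely many blow-ups occur; the composite $\pi(N)\circ\cdots\circ\pi(1)$ is the desired sequence, and on $X(N)$ every point carries a strict transform $\mathcal F_N$ that is non-singular or has a simple singularity.
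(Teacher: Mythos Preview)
The paper does not prove this theorem. It is stated there as a known result, with the non-dicritical case attributed to Cano--Cerveau and the general case to Cano; no argument is given beyond those citations and the subsequent list of the seven formal normal forms. So there is no ``paper's own proof'' to compare your proposal against.

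As for your proposal itself: what you have written is not a proof but an outline of the strategy actually followed in the cited references, and at that level it is broadly accurate --- adapted order relative to the exceptional divisor, permissible centers, a maximal-contact device to descend to Seidenberg's two-dimensional theorem, a pre-simple stage followed by elimination of resonances, and the dicritical case as the hard point separating Cano--Cerveau from Cano. But you should be aware that none of the genuinely difficult content is present: you assert the existence of a permissible center lowering a lexicographic invariant without specifying the invariant or proving the drop; the ``maximal-contact-type device'' is precisely where the foliated theory diverges from Hironaka's, since invariant hypersurfaces need not exist (this is the dicritical obstruction); and the ``global combinatorial invariant'' controlling resonant chains is gestured at but never defined. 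In Cano's paper each of these steps occupies dozens of pages. So your text is a fair summary of the architecture of the proof, suitable as a survey paragraph, but it would not be accepted as a proof on its own, and in the context of this paper the honest thing is simply to cite the references, as the author does.
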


The description of simple singularities can be given in terms of convenient adapted multiplicities (\cite{Cano}, \cite{CanoCerveau}). One of the difficulties in the theory is to give local models (like monomial equations in the case of hypersurfaces) for these simple singularities. After that, we can think that the simple singularities are given by  these normal forms.

So, let $\mathcal F$ be a codimension $1$ holomorphic foliation over $\mathbb C^3,0$. The foliation is said to be simple (\cite{CanoCerveau}) if and only if there exists a formal diffeomorphism $\phi \in \widehat{\mathrm{Diff}(\mathbb C^3,0)}$ (the formal completion of the group $\mathrm{Diff}(\mathbb C^3,0)$ of germs of holomorphic diffeomorphisms) such that $\phi^{-1} \mathcal F$ is given by one of the following meromorphic $1$-forms:

(1) $\frac{dx}{x} + \lambda\frac{dy}{y}\ \ ,\ \ \lambda \in \mathbb C\setminus\mathbb Q_{-}$,

(2) $\frac{dx}{x}+\Big(\varepsilon+\frac{1}{y^s}\Big) \frac{dy}{y }\ \ ,\ \  s\in \mathbb N\setminus\{0\}\ \ ,\ \   \varepsilon \in \mathbb  C$,

(3) $\frac{dx}{x} +\Big(\varepsilon + \frac{1}{(x^p y^q)^s}\Big) \Big(p \frac{dx}{x}+q\frac{dy}{y}\Big)$, $\mathrm{gcd}(p,q)=1$, $s\in \mathbb N\setminus\{0\}, \ \varepsilon \in \mathbb C$.

(4) $\alpha \frac{dx}{x} + \beta \frac{dy}{y}+\frac{dz}{z}\ \ ,\ \ \alpha\beta \neq 0$, $\alpha, \beta, \alpha/\beta \in \mathbb C\setminus\mathbb Q_{-}$,

(5) $\frac{dx}{x}+ \beta \frac{dy}{y}+(\varepsilon+\frac{1}{z^s}) dz,\ \ s\in \mathbb N\setminus\{0\},\ \ 0\neq \beta \in \mathbb C\setminus\mathbb Q_{-}$

(6) $\frac{dx}{x}+\beta \frac{dy}{y}+ \Big(\varepsilon+\frac{1}{(y^q z^q)^s}\Big) \Big(p\frac{dy}{y}+q\frac{dz}{z}\Big), s\in \mathbb N\setminus\{0\}$,
$gcd (p,q)=1, \ \ \varepsilon, \beta\in \mathbb C$

(7) $\frac{dx}{x}+\beta \frac{dy}{y}+ \Big(\varepsilon+\frac{1}{(x^p y^q z^r)^s}\Big) \Big(p\frac{dx}{x}+q\frac{dy}{y}+r \frac{dz}{z}\Big)$, $s\in \mathbb N\setminus\{0\}$,
$\mathrm{gcd} (p,q,r)=~1$, $\varepsilon$, $\beta\in \mathbb C$;

$x,y,z$ are linear coordinates in $\mathbb C^3$.

In some sense the seven types of previous $1$-forms describe the normal forms of generic meromorphic $1$-forms with normal crossing divisors of poles. Note that the forms (1), (2) and (3) are the models in dimension 2. Remark also that if $\mathcal F$ is a foliation over $M$ with only simple singularities, then $\mathcal F$ is given at each point $x\in M$ by a (formal) meromorphic $1$-form $\Omega_{x}$.

The previous $1$-form $\Omega_{x}$ is unique (up to multiplication by a complex number) except when $\mathcal F_{,x}$ has a non constant holomorphic first integral. The reason is that if two (formal) meromorphic closed $1$-forms $\Omega_{1}$ and $\Omega_{2}$ give the same foliation, then $\Omega_{2}= f.\Omega_{1}$ where $f$ is a (formal) meromorphic function. By differentiation, we see that, if $f$ is non-constant, $f$ is a (formal) meromorphic first integral. But it is easy to see that if $\mathcal F_{,x}$ is either non singular or simple, and if $\mathcal F_{,x}$ has a (formal) non constant first integral, then $\mathcal F_{,x}$ has a ordinary (formal) first integral (without poles); following Malgrange's singular Frobenius theorem $\mathcal F_{,x}$ has a non constant holomorphic first integral (\cite{Malgrange}).

\section{Foliations with simple singularities on $\mathbb P_{\mathbb C}^{3}$}

As we have seen, any codimension $1$ holomorphic foliation $\mathcal F$ on $\mathbb P_{\mathbb C}^{3}$ has a non trivial curve of singularities. We consider now special foliations on $\mathbb P_{\mathbb C}^{3}$.

\begin{proposition}\label{Prop:4}
Let $\mathcal F$ be a codimension $1$ holomorphic foliation on $\mathbb P_{\mathbb C}^{3}$. Suppose that there exists a component $\gamma$ $($of dimension $1)$ of the singular locus $\mathrm{Sing}\,\mathcal F$ such that: 

1) for any $x\in \gamma$, the germ $\mathcal F_{,x}$ has a simple singularity, in other words $\mathcal F$ is reduced along $\gamma$;

2) for any $x\in \gamma$, the germ $\mathcal F_{,x}$ has not a holomorphic non constant first integral.

Then $\mathcal F$ is given by a global closed meromorphic $1$-form. In particular $\mathcal F$ is transversally projective.
\end{proposition}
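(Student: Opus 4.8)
The plan is to produce a global rational ($=$ meromorphic, by Chow) $1$-form defining $\mathcal F$ together with a rational function that makes it closed. By the foliated Chow Theorem, fix a global rational $1$-form $\omega$ defining $\mathcal F$; for a suitable choice of the hyperplane at infinity we may assume that the polar divisor of $\omega$ is contained in a hyperplane $H$ not containing $\gamma$ (note that, $\gamma$ being a projective curve, $\gamma\cap H\neq\emptyset$). It is then enough to find a rational function $G$ with $d(G\omega)=0$: indeed $\Omega:=G\omega$ is a closed meromorphic $1$-form defining $\mathcal F$, and the triple $(\Omega,0,0)$ satisfies the Maurer--Cartan relations, so $\mathcal F$ is transversally projective.

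Now the local data along $\gamma$. For $x\in\gamma$, hypothesis 1) and the classification of simple singularities furnish a formal closed meromorphic $1$-form $\widehat\Omega_{x}$ defining $\mathcal F_{,x}$ (one of the models $(1)$--$(7)$, read through a formal diffeomorphism). By hypothesis 2) the germ $\mathcal F_{,x}$ has no holomorphic non constant first integral, hence --- by Malgrange's singular Frobenius theorem, as recalled in Section 2 --- no formal one; as explained there, $\widehat\Omega_{x}$ is therefore unique up to a multiplicative constant. Writing $\widehat\Omega_{x}=F_{x}\,\omega$, the formal meromorphic function $F_{x}$ is well defined up to a constant, so the $1$-form $\theta_{x}:=dF_{x}/F_{x}$ is canonically attached to $(\mathcal F,\omega)$ near $x$, and differentiating $\widehat\Omega_{x}=F_{x}\omega$ yields $d\omega+\theta_{x}\wedge\omega=0$. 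Being canonical, the $\theta_{x}$ agree on overlaps and patch into a closed $1$-form $\widehat\theta$ on the formal completion $\widehat X_{\gamma}$ of $X=\mathbb P^3_{\mathbb C}$ along $\gamma$, whose coefficients in affine coordinates are formal--rational functions along $\gamma$ (locally quotients of formal holomorphic functions, since $\widehat\Omega_{x}$ is formal--meromorphic and $\omega$ is rational) and which satisfies $d\omega+\widehat\theta\wedge\omega=0$ formally along $\gamma$.

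At this point the Hironaka--Matsumura prolongation theorem enters: $\gamma$ is irreducible, hence connected, and of positive dimension, so every formal--rational function along $\gamma$ is the restriction of a rational function on $\mathbb P^3_{\mathbb C}$; applying this to the coefficients of $\widehat\theta$ yields a closed rational $1$-form $\theta$ on $\mathbb P^3_{\mathbb C}$ extending $\widehat\theta$, and (a rational form vanishing on $\widehat X_{\gamma}$ being zero) the relation $d\omega+\theta\wedge\omega=0$ holds on all of $\mathbb P^3_{\mathbb C}$. One then analyses $\theta$: from the relation, every polar component of $\theta$ is $\mathcal F$-invariant (at a generic point of such a component $\omega$ is holomorphic and non-zero whereas $\theta\wedge\omega=-d\omega$ is holomorphic, forcing tangency), and near $\gamma$, where $\theta=dF_{x}/F_{x}$ with $F_{x}$ formal--meromorphic, these poles are simple with integral residues. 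Using that $\mathrm{Sing}\,\mathcal F$ has codimension $2$ and that every polar component of $\theta$ meets $\gamma$ (replacing, if necessary, $\omega$ by $P\omega$ and $\theta$ by $\theta-dP/P$ for a suitable rational $P$), one concludes that $\theta$ has simple poles with integral residues everywhere; subtracting the corresponding logarithmic forms and using $H^{0}(\mathbb P^{3}_{\mathbb C},\Omega^{1})=0$ gives $\theta=dG/G$ for a rational function $G$. Then $d(G\omega)=dG\wedge\omega+G\,d\omega=G\,\theta\wedge\omega+G\,d\omega=G\,(\theta\wedge\omega+d\omega)=0$, so $G\omega$ is the sought closed meromorphic $1$-form, and $\mathcal F$ is transversally projective.

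The main obstacle is precisely this use of the Hironaka--Matsumura theorem together with the accompanying analysis of $\theta$. One must first manufacture, out of the normal forms $(1)$--$(7)$ --- which are given only up to a scalar and through a formal diffeomorphism --- a genuine formal--rational object globally defined along $\gamma$; this is what the passage to $\theta_{x}=dF_{x}/F_{x}$ accomplishes, the scalar ambiguity being killed by the logarithmic differential. One then has to check that this object is in the scope of the prolongation theorem and, after algebraisation, that the rational closed form $\theta$ is the logarithmic derivative of a rational function --- the delicate point where the hypothesis ``no first integral'' and the codimension-$2$ geometry of $\mathrm{Sing}\,\mathcal F$ re-enter to control the residues and the polar locus of $\theta$.
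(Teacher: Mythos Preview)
Your proof is correct and follows essentially the same route as the paper's: build the formal closed $1$-form $\theta=dF_x/F_x$ along $\gamma$ from the local closed models (using hypothesis~2 to kill the scalar ambiguity), algebraize it via Hironaka--Matsumura, and then show that the resulting closed rational form has only simple poles with integral residues and no exact part, hence equals $dG/G$, so that $G\omega$ is closed. The only superfluous detour is the parenthetical modification ``$\omega\mapsto P\omega$, $\theta\mapsto\theta-dP/P$'': every hypersurface in $\mathbb P^3_{\mathbb C}$ meets the positive-dimensional curve $\gamma$, so each polar component of $\theta$ is already visible in the formal neighbourhood of $\gamma$, where $\theta=dF_x/F_x$ forces simple poles and integer residues.
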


\begin{proof}
Assume at first that all the local models of $\mathcal F$ along $\gamma$ are given by meromorphic closed one-forms (that is the normalizing diffeomorphisms $\phi$ are convergent one). Then there exist a finite covering of $\gamma$ by open sets $U_{\alpha}$ and closed meromorphic $1$-forms $\Omega_{\alpha}$ defined on $U_{\alpha}$ such that $\mathcal F_{\vert U_\alpha}$ is given by $\Omega_{\alpha}$.
  If $\omega$ is a global  rational $1$-form on $\mathbb P_{\mathbb C}^{3}$ associated to $\mathcal F$ we have:
   $\Omega_{\alpha}=H_{\alpha}.\omega_{\vert U_{\alpha}}$, with $H_{\alpha}$ meromorphic on $U_{\alpha}$. On a non trivial intersection $U_{\alpha} \cap U_{\beta}$ we have:
  $$\Omega_{\alpha}=\lambda_{\alpha \beta} \Omega_{\beta}.$$
By hypothesis for a good choice of the covering, the cocycles $\lambda_{\alpha \beta}$ are constant.

The equality $H_{\alpha}=\lambda_{\alpha \beta} H_{\beta}$ gives by differentiation $\frac{dH_{\alpha}}{H_{\alpha}}=\frac{d H_{\beta}}{H_{\beta}}$ and the $\frac{d H_{\alpha}}{H_{\alpha}}$ define a closed meromorphic $1$-form $\omega'_{1}$ on a neighborhood of $\gamma$.

In fact, due to the possible divergence of the normalizing transformations, the local models $\Omega_{x}, x\in \gamma$, are not a priori convergent; a delicate study of the normalisations $\phi=\phi_{x}$ allows to study the dependence of $\Omega_{x}$ relative to $x\in \gamma$ and to see that the form $\omega'_{1}$ is a "formal meromorphic $1$-form along $\gamma$". The deep works \cite{Hironaka} and \cite{HironakaMatsumura} say that the form $\omega'_{1}$ is in fact the restriction of a global closed meromorphic $1$-form $\omega_{1}$ on $\mathbb P_{\mathbb C}^{3}$. This form $\omega_{1}$ has a decomposition as in Example~\ref{ex:4}:
$$\omega_{1}= \Sigma \lambda_{i} \frac{d f_{i}}{f_{i}}+dh$$
with $\lambda_{i} \in \mathbb C$, $f_{i}$ and $h$ rational functions.

Using the local construction of $\omega_{1}$ $(\omega_{1,x}=\frac{d H_{x}}{H_{x}}$ for $x\in \gamma$) we see that $\lambda_{i} \in \mathbb Z$ and $h\equiv 0$; so $\omega_{1}=\frac{dH}{H}$ for some rational function $H$. If we come back to the relations $\Omega_{\alpha}=H_{\alpha}.\omega_{\vert U_{\alpha}}$, we observe that 
$$\frac{dH}{H} \wedge \omega+d\omega=0$$
and the rational $1$-form $H.\omega$ is closed and defines the foliation $\mathcal F$. 
\end{proof}

\begin{remark}
In \cite{LinsNeto} Lins Neto uses that idea to glue local meromorphic closed Pfaffian forms to obtain a proximate result in a particular case.
\end{remark}

Now the next statement says that if a foliation $\mathcal F$ on $\mathbb P_{\mathbb C}^{3}$ has only simple singularities, then there exists a component $\gamma$ of $\mathrm{Sing}\,\mathcal F$ satisfying Proposition \ref{Prop:4}.

\begin{proposition}
Let $\mathcal F$ be a holomorphic codimension $1$ foliation over $\mathbb P_{\mathbb C}^3$. Suppose that all the non isolated singularities of $\mathcal F$ are simple; then the hypothesis of Proposition \ref{Prop:4} are satisfied.
\end{proposition}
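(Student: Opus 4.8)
The plan is to produce, along every one-dimensional component $\gamma$ of $\mathrm{Sing}\,\mathcal F$, a dichotomy: either the generic local model of $\mathcal F$ along $\gamma$ has a holomorphic non-constant first integral, or it does not. By Proposition \ref{Prop:4} it suffices to exhibit one component $\gamma$ of the second type, so I would argue by contradiction: assume that for every $1$-dimensional component $\gamma$ the generic germ $\mathcal F_{,x}$, $x\in\gamma$, admits a holomorphic non-constant first integral. The first step is to understand what the seven normal forms (1)--(7) look like when a holomorphic first integral is present. A simple germ has a holomorphic first integral precisely in the ``resonant'' sub-cases: type (1) with $\lambda=-p/q\in\mathbb Q_{<0}$ replaced by its allowed analogue (so really $\lambda\in\mathbb Q_{>0}$ giving $x^q/y^p$), type (4) with $\alpha,\beta$ positive rationals, and — crucially — only those; the forms with an $\varepsilon+1/(\cdots)^s$ factor, being ``saddle-node'' type, never have a holomorphic first integral. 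So the standing assumption forces the generic transverse type of $\mathcal F$ along each curve of singularities to be of ``linearizable diagonal'' type (1) or (4).

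The second step is to globalize this local information. Pick a generic point $x$ on a component $\gamma$; the germ of $\mathcal F$ there is, up to formal (hence here, by Malgrange, holomorphic) change of coordinates, $p\,\frac{dx}{x}-q\,\frac{dy}{y}$ (in the surface case, adding a dummy variable $z$) or the three-variable diagonal model. In either case the local holomorphic first integral $f_x=x^q y^p\cdots$ is a unit times a product of the local branches of the invariant hypersurfaces through $\gamma$. Now I would intersect $\mathbb P^3_{\mathbb C}$ with a generic plane $H\cong\mathbb P^2_{\mathbb C}$: the restriction $\mathcal F|_H$ is a codimension-one foliation on $\mathbb P^2$ whose singular points are exactly $\gamma\cap H$ together with finitely many others, and at each point of $\gamma\cap H$ the restricted foliation has a holomorphic first integral of the above diagonal type with the \emph{same} eigenvalue ratio. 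The key classical input is then that a foliation on $\mathbb P^2$ all of whose singularities (or at least all on a given ``big'' set) are of this linearizable diagonal type is very constrained — one wants to invoke an index/Baum--Bott or Camacho--Sad computation along the curve $\gamma$, or a Riccati/closed-form criterion, to conclude that $\mathcal F$ itself is already given by a closed rational $1$-form, so that \emph{every} component $\gamma$ of $\mathrm{Sing}\,\mathcal F$ trivially satisfies condition 2) of Proposition \ref{Prop:4} except possibly on a proper analytic subset, and one such $\gamma$ can be chosen.

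The cleanest route, and the one I expect the paper takes, is probably slightly different and more economical: rather than classifying, use a \emph{counting/genericity} argument. Along a fixed component $\gamma$ the transverse type is constant on a Zariski-open subset of $\gamma$; the set of $\gamma$ for which this generic type has a holomorphic first integral would, if it were \emph{all} of them, force too much invariant-hypersurface structure — specifically, each such $\gamma$ would lie in the intersection of two invariant hypersurfaces of $\mathcal F$, and by a degree count (the invariant hypersurfaces of a degree-$\nu$ foliation on $\mathbb P^3$ have bounded total degree, via the fact that $\omega_{\nu+1}\wedge\, \cdot$ annihilates them) there cannot be enough of them to cover all components of $\mathrm{Sing}\,\mathcal F$ unless $\mathcal F$ is a pencil, i.e. degree $0$, which is excluded once $\nu\geq 1$; and in the pencil case the statement is checked by hand. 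The main obstacle, and the step I would budget the most care for, is precisely this last one: showing that ``every component of $\mathrm{Sing}\,\mathcal F$ carries a generic holomorphic first integral'' is so rigid that it collapses to the degree-zero case. This requires a genuine global argument — controlling how the local invariant surfaces through the various $\gamma$'s fit into finitely many global algebraic hypersurfaces, and ruling out the possibility that a single foliation has its entire singular locus ``resonant'' — and is the real content beyond the formal normal-form bookkeeping of Section 2.
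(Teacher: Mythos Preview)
Your setup is right --- argue by contradiction, assume every one-dimensional component $\gamma$ of $\mathrm{Sing}\,\mathcal F$ carries a holomorphic first integral at the generic (hence every) point, and restrict to a generic plane $H\cong\mathbb P^2_{\mathbb C}$ --- and you even name the correct tool in passing (``an index/Baum--Bott computation''). But you then abandon it for what you call the ``cleanest route'', a degree count on invariant hypersurfaces, and this is where the proposal goes astray. The paper does \emph{not} take that route; it uses the Baum--Bott formula on $\mathbb P^2$ directly, and the whole argument fits in three lines.

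Here is what the paper actually does. The restriction $\mathcal F_0=i^{-1}\mathcal F$ to a generic plane has only simple singularities. Its singular points are of two sorts: tangency points $m_0$ with $i(m_0)\notin\mathrm{Sing}\,\mathcal F$, where $\mathcal F_0$ has a Morse first integral and hence $\mathrm{BB}(\mathcal F_0;m_0)=0$; and the points $m_0\in i^{-1}(\gamma_i)$. Under the standing assumption, at each of the latter $\mathcal F_0$ is locally $p\,y\,dx+q\,x\,dy$ with $p,q\in\mathbb N\setminus\{0\}$, so $\lambda_1/\lambda_2=-q/p$ and
\[
\mathrm{BB}(\mathcal F_0;m_0)=\frac{(\lambda_1+\lambda_2)^2}{\lambda_1\lambda_2}=-\frac{(p-q)^2}{pq}\le 0.
\]
Summing, $\sum_{m_0}\mathrm{BB}(\mathcal F_0;m_0)\le 0$, which contradicts the global Baum--Bott formula $\sum_{m_0}\mathrm{BB}(\mathcal F_0;m_0)=(n+2)^2>0$ on $\mathbb P^2$. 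That is the entire proof.

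Your alternative route has a genuine gap. The claim that ``there cannot be enough invariant hypersurfaces to cover all components of $\mathrm{Sing}\,\mathcal F$'' does not follow from a degree bound: two fixed invariant surfaces can already account for many components of $\mathrm{Sing}\,\mathcal F$ (their intersection curve can be reducible, and other components can lie on the same surfaces), and in any case the number of one-dimensional components of $\mathrm{Sing}\,\mathcal F$ is not in general large relative to $\deg\mathcal F$. Darboux--Jouanolou type bounds go the other way (many invariant hypersurfaces $\Rightarrow$ rational first integral), which would give the \emph{conclusion} of Proposition~\ref{Prop:4} rather than its hypotheses, and only after extra work. So the degree-counting paragraph, which you yourself flag as the crux, does not close; the Baum--Bott sign computation is both what the paper does and the missing idea in your write-up.
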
 

\begin{proof}
Take a generic linear planar section $i\colon\mathbb P_{\mathbb C}^2 \rightarrow \mathbb P_{\mathbb C}^3$ and denote by $\mathcal F_{0}$ the "restriction"  $i^{-1} \mathcal F$. It can be seen that all the singular points of $\mathcal F_{0}$ are simple (in the sense of dimension $2$). For such a singular point there is an index, the so-called Baum-Bott index. For a "hyperbolic" singular point $m_{0}$ of $\mathcal F_{0}$, that is given locally by a $1$-form of type:
$$\lambda_{1} x dy-\lambda_{2} y dx +\cdots$$
the Baum-Bott index is by definition $\mathrm{BB}(\mathcal F_{0}; m_{0})=\frac{(\lambda_{1}+\lambda_{2})^2}{\lambda_{1} \lambda_{2}}$. In the general case $\mathrm{BB}(\mathcal F_{0};m)$ is given by an explicit integral formula (\cite{Brunella}). There is a global index formula relating the local $\mathrm{BB}(\mathcal F_{0};m_0)$ to some special Chern-class, namely in the case of $\mathbb P_{\mathbb C}^2$:
$$\displaystyle\mathop{\Sigma}_{m_{0} \in\mathrm{Sing}\, \mathcal F_{0}}  \mathrm{BB}(\mathcal F_{0};m_0)=(n+2)^2$$
where $n$ is the degree of the foliation $\mathcal F_{0}$; this is the Baum-Bott formula (\cite{Brunella}).

Note  that if $m_{0}=i^{-1} (m)$ is a contact-singularity of $\mathcal F_{0}$, {\it i.e.} $m\notin \mathrm{Sing}\,\mathcal F$, then~$\mathcal F_{0}$ has a local holomorphic first integral of Morse type at $m_{0}$; as a consequence we have $\mathrm{BB}(\mathcal F_{0};m_{0})=0$. Suppose now that for all point $x$ belonging to any dimension~$1$ component $\gamma_{i}$ of  $\mathrm{Sing}\,\mathcal F$, the germ $\mathcal F_{,x} , x\in \gamma_{i}$, has a local non trivial holomorphic first integral. Then at each point $m_{0}\in\mathrm{Sing}\, \mathcal F_{0}$, the foliation is given by a $1$-form of the following type:
$$\omega_{, m_{0}}=p y dx+q x dy,\ \ \ p,q\in \mathbb N\setminus\{0\}, \ \textrm{gcd}(p,q)=1.$$
We see that the Baum-Bott index $\mathrm{BB}(\mathcal F_{0};m_{0})$ is negative, in contradiction with the Baum-Bott formula. 
\end{proof}

We are now able to give the main result of this paper:

\begin{theorem}\label{thm:7} 
Let $\mathcal F$ be a codimension $1$ holomorphic foliation on $\mathbb P_{\mathbb C}^3$. Suppose that all the non isolated singularities of $\mathcal F$ are simple. Then $\mathcal F$ is given by a closed rational $1$-form.
\end{theorem}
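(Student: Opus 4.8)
The proof of Theorem \ref{thm:7} is now a direct combination of the two preceding propositions. The plan is as follows.

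\begin{proof}[Proof proposal for Theorem \ref{thm:7}]
First I would apply the second Proposition above: since all the non isolated singularities of $\mathcal F$ are simple, the hypotheses of Proposition \ref{Prop:4} are satisfied, that is, there exists a dimension $1$ component $\gamma$ of $\mathrm{Sing}\,\mathcal F$ along which $\mathcal F$ is reduced and such that no germ $\mathcal F_{,x}$, $x\in\gamma$, admits a non constant holomorphic first integral. By Proposition \ref{Prop:4} applied to this $\gamma$, the foliation $\mathcal F$ is then given by a global closed meromorphic $1$-form $\Omega$ on $\mathbb P^3_{\mathbb C}$.

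It remains to upgrade ``closed meromorphic'' to ``closed rational''. Here I would invoke the algebraicity of meromorphic objects on the projective space: a meromorphic function on $\mathbb P^n_{\mathbb C}$ is rational (a classical Chow/Serre-type statement, GAGA), and likewise a closed meromorphic $1$-form on $\mathbb P^n_{\mathbb C}$ is rational. Concretely, in the decomposition produced inside the proof of Proposition \ref{Prop:4} one has $\Omega = H\cdot\omega$ with $\omega$ the global \emph{rational} $1$-form attached to $\mathcal F$ by foliated Chow (Theorem 1, restricted to affine charts as in the Remark) and with $H$ a rational function; hence $\Omega$ is automatically rational. Thus $\mathcal F$ is defined by a closed rational $1$-form, which is the assertion.

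The substantive content is therefore entirely packaged in the two propositions, and the only genuine obstacle was already dealt with there: namely, in Proposition \ref{Prop:4}, controlling the dependence of the local normal forms $\Omega_x$ on the point $x\in\gamma$ when the normalizing diffeomorphisms $\phi_x$ diverge, so that the glued object $\omega_1'$ is only a \emph{formal} meromorphic $1$-form along $\gamma$, together with the use of the Hironaka--Matsumura prolongation theorem \cite{Hironaka},\cite{HironakaMatsumura} to promote $\omega_1'$ to a global closed meromorphic $1$-form $\omega_1$ on $\mathbb P^3_{\mathbb C}$. Once that is granted, identifying the periods $\lambda_i$ as integers and $h\equiv 0$ from the local shape $\omega_{1,x}=dH_x/H_x$, and concluding that $H\cdot\omega$ is closed and defines $\mathcal F$, is routine. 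So at the level of Theorem \ref{thm:7} there is nothing more to prove: it is the concatenation of the two propositions followed by the elementary remark that a closed meromorphic $1$-form on $\mathbb P^3_{\mathbb C}$ is rational.
\end{proof}
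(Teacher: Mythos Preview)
Your proposal is correct and matches the paper's approach exactly: the paper presents Theorem~\ref{thm:7} as the immediate concatenation of the two preceding propositions, with no further argument. Your extra remark that the resulting closed $1$-form is automatically rational (since in the proof of Proposition~\ref{Prop:4} one ends with $H\cdot\omega$ where both $H$ and $\omega$ are rational) just makes explicit what the paper leaves implicit by using ``meromorphic'' and ``rational'' interchangeably on $\mathbb P^3_{\mathbb C}$.
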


A standard extension result implies the:

 \begin{corollary} 
Let $\mathcal F$ be a codimension $1$ holomorphic foliation on $\mathbb P_{\mathbb C}^n, n\geq 3$. Suppose that there exists a linear section $i\colon\mathbb P_{\mathbb C}^3 \rightarrow \mathbb P_{\mathbb C}^n$, such that $i^{-1} \mathcal F$ is like in Theorem \ref{thm:7}. Then we have the same conclusion: $\mathcal F$ is given by a closed meromorphic $1$-form.
 \end{corollary}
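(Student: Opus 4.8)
The plan is to deduce the corollary from Theorem~\ref{thm:7} by a Lefschetz-type extension argument, the linear section $i$ serving only to carry the property ``defined by a closed rational $1$-form'' from $\mathbb P^{3}_{\mathbb C}$ up to $\mathbb P^{n}_{\mathbb C}$. First, since $i^{-1}\mathcal F$ is by hypothesis as in Theorem~\ref{thm:7}, that theorem already provides a closed rational $1$-form defining $i^{-1}\mathcal F$ on $i(\mathbb P^{3}_{\mathbb C})\simeq\mathbb P^{3}_{\mathbb C}$; moreover we may take $i$ generic, the hypothesis of Theorem~\ref{thm:7} being satisfied by a Zariski-generic linear $\mathbb P^{3}_{\mathbb C}$ as soon as it holds for one. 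On the other side, by the foliated Chow theorem fix a homogeneous polynomial $1$-form $\omega=\omega_{\nu+1}$ on $\mathbb C^{n+1}$ defining $\pi^{-1}\mathcal F$ with $\mathrm{cod}\,\mathrm{Sing}\,\omega\geq 2$; for $i$ generic the restriction $\omega|_{\mathbb C^{4}}$ still has singular locus of codimension $\geq 2$, hence is, up to a scalar, the homogeneous form attached to $\pi^{-1}(i^{-1}\mathcal F)$.

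Next I would encode the conclusion cohomologically. By the de Rham--Saito division lemma \cite{Saito}, integrability of $\omega$ yields a rational $1$-form $\eta$ on $\mathbb P^{n}_{\mathbb C}$ with $d\omega=\eta\wedge\omega$, unique modulo rational multiples of $\omega$; and $\mathcal F$ is defined by a closed rational $1$-form exactly when this class contains an $\eta$ of the form $dg/g$ with $g$ rational, since $d(g\omega)=0$ reads $d\omega=-(dg/g)\wedge\omega$. Theorem~\ref{thm:7} says this holds after restriction to $i(\mathbb P^{3}_{\mathbb C})$: there is a ratio $g_{0}$ of homogeneous polynomials with $\eta|_{i(\mathbb P^{3}_{\mathbb C})}\equiv dg_{0}/g_{0}$ modulo rational multiples of $\omega|$. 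It then remains to lift $g_{0}$ to a rational function $g$ on $\mathbb P^{n}_{\mathbb C}$ with $\eta\equiv dg/g$ modulo rational multiples of $\omega$; once this is done, $g\omega$ is the desired closed rational $1$-form defining $\mathcal F$, which is in particular transversally projective by Proposition~\ref{Prop:4} (and which is automatically rational, not merely meromorphic, since meromorphic $1$-forms on a projective space are rational).

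The heart of the matter, and the step I expect to be the main obstacle, is this last lifting, which is a Lefschetz-type statement: the polar divisor and the residues of a closed logarithmic $1$-form on $\mathbb P^{n}_{\mathbb C}$ --- equivalently the data $\sum\lambda_{i}\,df_{i}/f_{i}+dh$ of a closed rational $1$-form --- are faithfully recorded by the restriction to a generic linear $\mathbb P^{3}_{\mathbb C}$, because for $n\geq 3$ the relevant restriction maps on $H^{1}$ and $H^{2}$ are isomorphisms by the Lefschetz hyperplane theorem, $3$ lying strictly above the top degree $2$ that intervenes for a codimension $1$ foliation. One must still check that the ambiguity ``modulo rational multiples of $\omega$'' does not obstruct this descent and ascent --- for instance that an $\mathcal F$-invariant hypersurface of $\mathbb P^{n}_{\mathbb C}$ cannot vanish or split upon intersection with the section --- and this is precisely the content of the ``standard extension result'' invoked in the statement.
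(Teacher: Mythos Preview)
The paper gives no proof of this corollary beyond the phrase ``A standard extension result implies the:'' --- so there is nothing detailed to compare against. The implicit claim is simply that once Theorem~\ref{thm:7} yields a closed rational $1$-form defining $i^{-1}\mathcal F$ on $\mathbb P^3_{\mathbb C}$, a known extension principle (the one underlying, for instance, the identification of the component structure of $\mathcal F(n;d)$ with that of $\mathcal F(3;d)$ for $n\geq 3$) transfers this to $\mathcal F$ on $\mathbb P^n_{\mathbb C}$.

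Your proposal correctly isolates this structure and recasts the extension step through the Godbillon--Vey $1$-form $\eta$, reducing the question to lifting an integrating factor $g_0$ from $\mathbb P^3_{\mathbb C}$ to $\mathbb P^n_{\mathbb C}$. Two issues, however. First, the assertion that the hypothesis of Theorem~\ref{thm:7} holds for a Zariski-generic linear $\mathbb P^3_{\mathbb C}$ once it holds for one is neither justified nor needed: simplicity at \emph{every} point of the singular curve is not manifestly an open condition on the section, and in any case you do not use genericity in the sequel except to guarantee $\mathrm{cod}\,\mathrm{Sing}(\omega|_{\mathbb C^4})\geq 2$, which already follows (up to clearing a common polynomial factor) from $i^{-1}\mathcal F$ being a well-defined foliation on $\mathbb P^3_{\mathbb C}$. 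Second, and this is the real gap, the lifting of $g_0$ is only gestured at. Invoking the Lefschetz hyperplane theorem on $H^1$ and $H^2$ of projective space --- groups that are $0$ and $\mathbb Z$ independently of $n$ --- does not by itself produce a rational $g$ on $\mathbb P^n_{\mathbb C}$ with $\eta\equiv dg/g\pmod\omega$; and your closing sentence concedes that the remaining verification ``is precisely the content of the `standard extension result' invoked in the statement.'' That is circular: you have rephrased the black box rather than opened it.

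In sum, your write-up is a more articulate version of what the paper leaves implicit, and the reduction to an integrating-factor lifting is the right framing, but neither the paper nor your proposal actually supplies the extension argument.
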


 \begin{remark}
The structure of the ambiant space is important. For example a generic foliation on $\mathbb P_{\mathbb C}^2$ has only simple sigularities and is not given by a closed $1$-form; so there exist foliations on $\mathbb P_{\mathbb C}^2 \times \mathbb P_{\mathbb C}^1$ with only simple singularities which are not given by closed $1$-forms.
 \end{remark}

 \begin{remark}
 A consequence of Theorem \ref{thm:7} is the following: it is not possible to realize local dimension 3 simple singularities with divergent normalization, by a global foliation on $\mathbb P_{\mathbb C}^3$ having only simple singularities.
 \end{remark}
 
\subsection*{Acknowledgements}
 I wish to express my gratitude to Julie D\'eserti for her constant help.

\bibliographystyle{plain}
\bibliography{biblio}
\nocite{*}

\end{document}